\newtheoremstyle{theorem}
  {10pt}          
  {10pt}  
  {\sl}  
  {\parindent}     
  {\bf}  
  {. }    
  { }    
  {}     
\theoremstyle{theorem}
\newtheorem{theorem}{Theorem}
\newtheorem{lemma}[theorem]{Lemma}
\newtheorem{remark}[theorem]{Remark}
\newtheorem{proposition}[theorem]{Proposition}
\newtheorem{example}[theorem]{Example}
\newtheoremstyle{defi}
  {10pt}          
  {10pt}  
  {\rm}  
  {\parindent}     
  {\bf}  
  {. }    
  { }    
  {}     
\theoremstyle{defi}
\newtheorem{definition}[theorem]{Definition}
\begin{document}

\title{ Caputo  q-Fractional Initial Value Problems and a q-Analogue
Mittag-Leffler Function }

\author{Thabet Abdeljawad and Dumitru Baleanu\footnote{On leave of absence
from Institute of Space Sciences, P.O.BOX, MAG-23, R
76900,Maturely-Bucharest, Romania, Emails: dumitru@cankaya.edu.tr,
baleanu@venus.nipne.ro}\\
 Department of Mathematics and
Computer Science\\
  \c{C}ankaya University, 06530 Ankara, Turkey}
\maketitle
\begin{abstract}
Caputo q-fractional derivatives are introduced and studied. A Caputo -type
q-fractional initial value problem is solved and its solution is expressed
by means of a new introduced q-Mittag-Leffler function. Some open problems
about q-fractional integrals are  proposed as well.

{\bf AMS Subject Classification:}  26A33; 60G05; 60G07; 60G012;
60GH05,41A05, 33D60, 34G10.

{\bf Key Words and Phrases:} Left q-fractional integral, right
q-fractional integral, Caputo left and right q-fractional derivatives,
Q-operator,q-Mittag-Leffler function, time scale.

\end{abstract}

\section{Introduction } \label{s:1}

The concept of fractional calculus is not new. It is believed to have
stemmed from a question raised in 1695. However, it has gained
considerable popularity and importance during the last three decades or
so. This is due to its distinguished applications in numerous diverse
fields of science and engineering (\cite{Samko}, \cite{Podlubny},
\cite{Kilbas}). The q-calculus is also not of recent appearance. It was
initiated in twenties of the last century. As a survey about this calculus
we refer to \cite{history}. Starting from the q-analogue of Cauchy formula
\cite{Al-salam2}, Al-Salam started the fitting of the concept of
q-fractional calculus. After that he (\cite{Alsalaml}, \cite{Alsalam}) and
Agarwal R. \cite{Agarwal} continued on by studying certain q-fractional
integrals and derivatives, where they proved the semigroup properties for
left and right (Riemann)type fractional integrals but without variable
lower limit and variable upper limit, respectively. Recently, the authors
in \cite{Pred} generalized the notion of the (left)fractional q-integral
and q-derivative by introducing variable lower limit and proved the
semigroup properties. However, the case of the (right) q-fractional
integral by introducing a variable upper limit is still open. This open
problem will be stated clearly in this article.

Very recently and after the appearance of time scale calculus (see for
example \cite{Boh}), some authors started to pay attention and apply the
techniques of time scale to discrete fractional calculus
(\cite{Ferd},\cite{Feri},\cite{Ferq}, \cite{Th}) benefitting from the
results announced before in \cite{Miller}. All of these results are mainly
about fractional calculus on the time scales $T_q=\{q^n:n \in
\mathbb{Z}\}\cup \{0\}$ and $h\mathbb{Z}$ \cite{Bastos}. However, the
study of fractional calculus on time scales combining the previously
mentioned time scales is still unknown. Continuing in this direction and
being motivated by all above, in this article we  define and study Caputo
type q-fractional derivatives. This manuscript is organized as follows:
Section 2 contains essential definitions and results  about fractional
q-integrals and q-derivatives, where we present an open problem about the
semigroup property. Section 3 is devoted to define and study left and
right Caputo q-fractional derivatives. In Section 4, we solve a Caputo
q-fractional nonhomogenuous linear dynamic equation, where the solution is
expressed by a q-analogue of Mittag-Leffler function.

\section {Preliminaries and Essential Results about q-Calculus, Fractional
q-Integrals and q-Derivatives}\label{s:3}

 For the theory of q-calculus we refer the reader to the survey
\cite{history} and for the basic definitions and results for the
q-fractional calculus we refer to \cite{Ferq}. Here we shall
summarize some of those basics.

For $0<q<1$, let $T_q$ be the time scale
$$T_q=\{q^n:n \in \mathbb{Z}\}\cup \{0\}.$$ where $Z$ is the set of
integers. More generally, if $\alpha$ is a nonnegative real number
then we define the time scale
$$T_q^\alpha=\{q^{n+\alpha}:n \in Z\}\cup \{0\},$$ we write $T_q^0=T_q.$

 For a function $f:T_q\rightarrow \mathbb{R}$, the nabla
q-derivative of $f$ is given by
\begin{equation} \label{qd}
\nabla_q f(t)=\frac{f(t)-f(qt)}{(1-q)t},~~t \in T_q-\{0\}
\end{equation}
The nabla q-integral of $f$ is given by
\begin{equation} \label{qi}
\int_0^t f(s)\nabla_q s=(1-q)t\sum_{i=0}^\infty q^if(tq^i)
\end{equation}
and for $0\leq a \in T_q$

$$\int_a^t f(s)\nabla_q s=\int_0^t f(s)\nabla_q s - \int_0^a f(s)\nabla_q s$$
On the other hand
\begin{equation} \label{r1}
\int_t^\infty f(s)\nabla_q s=(1-q)t\sum_{i=1}^\infty q^{-i}
f(tq^{-i})
\end{equation}
 and for $0<b<\infty$ in $T_q$
\begin{equation} \label{r2}
\int_t^b f(s)\nabla_q s=\int_t^\infty f(s)\nabla_q s - \int_b^\infty
f(s)\nabla_q s
\end{equation}
 By the fundamental theorem in q-calculus we have
\begin{equation} \label{fq}
\nabla_q \int_0^t f(s)\nabla_q s=f(t)
\end{equation}
and if $f$ is continuous at $0$, then
\begin{equation} \label{cfq}
\int_0^t \nabla_qf(s)\nabla_q s=f(t)-f(0)
\end{equation}
Also the following identity will be helpful
\begin{equation} \label{help}
\nabla_q\int_a^t f(t,s)\nabla_q s=\int_a^t \nabla_qf(t,s)\nabla_q
s+f(qt,t)
\end{equation}
Similarly the following identity will be useful as well
\begin{equation} \label{help1}
\nabla_q\int_t^b f(t,s)\nabla_q s=\int_{qt}^b \nabla_qf(t,s)\nabla_q
s-f(t,t)
\end{equation}
The q-derivative in (\ref{help}) and (\ref{help1}) is applied with
respect to t.

From the theory of q-calculus and the theory of time scale more
generally, the following product rule is valid

\begin{equation} \label{qproduct}
\nabla_q (f(t)g(t))=f(qt)\nabla_q g(t)+\nabla_q f(t)g(t)
\end{equation}
The q-factorial function for $n\in \mathbb{N}$ is defined by
\begin{equation} \label{qfact}
(t-s)_q^n=\prod_{i=0}^{n-1}(t-q^is)
\end{equation}
When $\alpha$ is a non positive integer, the  q-factorial function
is defined by
\begin{equation} \label{qfactg}
(t-s)_q^\alpha=t^\alpha\prod_{i=0}^\infty \frac{1- \frac{s}{t} q^i}
{1- \frac{s}{t} q^{i+\alpha}}
\end{equation}
We summarize some of the properties of  q-factorial functions, which
can be found mainly in \cite{Ferq}, in the following lemma

\begin{lemma} \label{qproperties}
(i)$ (t-s)_q^{\beta+\gamma}=(t-s)_q^\beta (t-q^\beta s)_q^\gamma$

(ii)$(at-as)_q^\beta=a^\beta (t-s)_q^\beta$

(iii) The nabla q-derivative of the q-factorial function with
respect to $t$ is

$$\nabla_q (t-s)_q^\alpha
=\frac{1-q^\alpha}{1-q}(t-s)_q^{\alpha-1}$$

(iv)The nabla q-derivative of the q-factorial function with respect
to $s$ is
$$\nabla_q (t-s)_q^\alpha
=-\frac{1-q^\alpha}{1-q}(t-qs)_q^{\alpha-1}$$ where
$\alpha,\gamma,\beta \in \mathbb{R}.$
\end{lemma}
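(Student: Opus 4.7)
The plan is to derive (ii) directly from the definition, then prove (i) as the main structural identity, and finally deduce (iii) and (iv) from (i) by evaluating the difference quotients explicitly.

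First I would dispatch (ii), which is immediate: in the definition
$(at-as)_q^\beta = (at)^\beta \prod_{i=0}^\infty \frac{1 - (as/at)q^i}{1-(as/at)q^{i+\beta}}$, the factors $a$ cancel in every ratio, leaving $a^\beta$ times the product defining $(t-s)_q^\beta$.

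Next I would prove (i) from the infinite product. Writing out both sides,
\[
(t-s)_q^\beta (t-q^\beta s)_q^\gamma
= t^\beta \prod_{i=0}^\infty \frac{1-(s/t)q^i}{1-(s/t)q^{i+\beta}}
\cdot t^\gamma \prod_{i=0}^\infty \frac{1-(s/t)q^{i+\beta}}{1-(s/t)q^{i+\beta+\gamma}},
\]
and the middle factors telescope, leaving exactly $t^{\beta+\gamma}\prod_{i=0}^\infty \frac{1-(s/t)q^i}{1-(s/t)q^{i+\beta+\gamma}} = (t-s)_q^{\beta+\gamma}$. I would check separately that for $\beta,\gamma\in\mathbb{N}$ the finite product definition yields the same identity, and that the $q$-convergence of the infinite products (for $0<q<1$ and admissible $s/t$) justifies the manipulation.

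Having (i), the $q$-derivative formulas fall out by pure algebra. For (iv), applying (i) with $\beta=\alpha-1,\gamma=1$ first to $(t-qs)_q^\alpha$ and to $(t-s)_q^\alpha$ gives the common factor $(t-qs)_q^{\alpha-1}$, and
\[
(t-s)_q^\alpha-(t-qs)_q^\alpha = (t-qs)_q^{\alpha-1}\bigl[(t-s)-(t-q^\alpha s)\bigr] = -s(1-q^\alpha)(t-qs)_q^{\alpha-1},
\]
so dividing by $(1-q)s$ produces (iv). For (iii), I would use (i) twice: once to factor $(t-s)_q^\alpha = (t-sq^{\alpha-1})(t-s)_q^{\alpha-1}$ (the choice $\beta=\alpha-1,\gamma=1$), and a short computation on the infinite product to relate $(qt-s)_q^\alpha$ to $(t-s)_q^\alpha$ by a rational factor; after simplifying, one obtains $(t-s)_q^\alpha-(qt-s)_q^\alpha = \frac{t(1-q^\alpha)}{t-sq^{\alpha-1}}(t-s)_q^\alpha$, and dividing by $(1-q)t$ delivers $\frac{1-q^\alpha}{1-q}(t-s)_q^{\alpha-1}$.

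The main obstacle is the rigorous handling of the infinite product in (i): one must justify the reindexing/telescoping and exclude values of $s/t$ at which denominators vanish (i.e.\ $s/t \neq q^{-(i+\beta)}$ for all $i\geq 0$). Once this is done, the rest is mechanical, and (iii)–(iv) reduce to the $q$-analogue of the classical identity $D(t-s)^n = n(t-s)^{n-1}$, but with the characteristic $q$-shift $s\mapsto qs$ (resp.\ the coefficient $(1-q^\alpha)/(1-q)$, which is the $q$-number $[\alpha]_q$).
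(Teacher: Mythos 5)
Your proof is correct, but there is nothing in the paper to compare it against: the paper states Lemma \ref{qproperties} without proof, quoting it from the literature (it says the properties "can be found mainly in \cite{Ferq}"), so your argument supplies a self-contained verification where the paper relies on a citation. Your route checks out: (ii) is immediate cancellation in the defining product; (i) follows by multiplying the two products and cancelling the factors $1-(s/t)q^{i+\beta}$ index by index, which is legitimate for $0<q<1$ since the products converge absolutely, and the general definition is indeed consistent with the finite product (\ref{qfact}) when the exponent is a positive integer; and (iii), (iv) then follow from the difference quotients $\frac{(t-s)_q^\alpha-(qt-s)_q^\alpha}{(1-q)t}$ and $\frac{(t-s)_q^\alpha-(t-qs)_q^\alpha}{(1-q)s}$, with both of your key identities, $(t-s)_q^\alpha-(t-qs)_q^\alpha=-s(1-q^\alpha)(t-qs)_q^{\alpha-1}$ and $(t-s)_q^\alpha-(qt-s)_q^\alpha=\frac{t(1-q^\alpha)}{t-sq^{\alpha-1}}(t-s)_q^\alpha$, being correct. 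One small slip in (iv): to extract the common factor $(t-qs)_q^{\alpha-1}$ from $(t-s)_q^\alpha$ you need (i) with $\beta=1$, $\gamma=\alpha-1$, giving $(t-s)_q^\alpha=(t-s)(t-qs)_q^{\alpha-1}$; the choice $\beta=\alpha-1$, $\gamma=1$ is the right one only for the other term, $(t-qs)_q^\alpha=(t-qs)_q^{\alpha-1}(t-q^\alpha s)$ — your displayed equation is right, only the parameter labeling is off. Finally, your convergence caveat is easily discharged in the paper's setting: for $t,s\in T_q$ the ratio $s/t$ is an integer power of $q$, so the excluded values $s/t=q^{-(i+\beta)}$ can only occur for integer exponents, where the finite-product form applies anyway.
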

For the q-gamma function, $\Gamma_q(\alpha)$, we refer the reader to
\cite{Ferq} and the references therein. We just mention here  the
identity

\begin{equation} \label{qid}
\Gamma_q(\alpha+1)=\frac{1-q^\alpha}{1-q}\Gamma_q(\alpha),~~\Gamma_q(1)=1,~\alpha
>0.
\end{equation}
The authors in \cite{Ferq} following \cite{Agarwal} defines the left
fractional q-integral of order $\alpha\neq 0,-1,-2,...$ by
\begin{equation} \label{ag}
 _{q}I^\alpha f(t)=\frac{1}{\Gamma_q(\alpha)}\int_0^t
 (t-qs)_q^{\alpha-1}f(s)\nabla_qs
\end{equation}

In \cite{Agarwal} it was proved that the left q-fractional integral
obeys the identity
\begin{equation}\label{t1}
 _{q}I^\beta ~ _{q}I^\alpha f(t)=
 _{q}I^{\alpha+\beta}f(t),~~~~\alpha~,\beta>0
\end{equation}
The left q-fractional integral $ _{q}I_a^\alpha$ starting from
$0<a\in T_q$ is to be defined by

\begin{equation} \label{t3}
 _{q}I_a^\alpha f(t)=\frac{1}{\Gamma_q(\alpha)}\int_a^t
 (t-qs)_q^{\alpha-1}f(s)\nabla_qs
\end{equation}

It is clear, from the q-analogue of Cauchy's formula
\cite{Al-salam2}, that

\begin{equation}\label{t11}
 \nabla_q^n ~_{q}I_a^n f(t)=f(t)
\end{equation}
  where $n$ is
a positive integer and $0\leq a \in T_q$

Recently, in Theorem 5 of \cite{Pred}, the authors there have proved
that
\begin{equation}\label{t4}
 _{q}I_a^\beta ~ _{q}I_a^\alpha f(t)=
 _{q}I_a^{\alpha+\beta}f(t),~~~~\alpha~,\beta>0
\end{equation}

The right q-fractional integral of order $\alpha$ is defined by
\cite{Agarwal}

\begin{equation}\label{t55}
I_q^ \alpha
f(t)=\frac{q^{-(1/2)\alpha(\alpha-1)}}{\Gamma_q(\alpha)}\int_t^\infty
(s-t)_q^{\alpha-1}f(sq^{1-\alpha})\nabla_qs
\end{equation}
and the right q-fractional integral of order $\alpha$
ending at $b$ for some $b \in T_q$ is defined by
\begin{equation}\label{t555}
_{b}I_q^ \alpha
f(t)=\frac{q^{-(1/2)\alpha(\alpha-1)}}{\Gamma_q(\alpha)}\int_t^b
(s-t)_q^{\alpha-1}f(sq^{1-\alpha})\nabla_qs
\end{equation}
Note that, while the left q-fractional integral $_{q}I_a^\alpha$
maps functions defined $T_q$ to functions defined on $T_q$, the
right q-fractional integral $_{b}I_q^\alpha$, $0<b\leq\infty$,  maps
functions defined on $T_q^{1-\alpha}$ to functions defined on $T_q$.

 It is clear, from the q-analogue of Cauchy's formula
\cite{Al-salam2}, that

\begin{equation}\label{s1}
 \nabla_q^n I_q^n f(t)=(-1)^nf(t)
\end{equation}

In \cite{Alsalam} it was proved that the right q-fractional integral
obeys the identity
\begin{equation}\label{s2}
 I_q^\beta ~ I_q^\alpha f(t)=
 I_q^{\alpha+\beta}f(t),~~~~\alpha~,\beta>0
\end{equation}
Taking into account the domain and the range of the right
q-fractional integral, as mentioned above, we note that the formula
(\ref{s2}) is valid under the condition that $f$ must be at least defined
on  $T_q$,
$T_q^{1-\beta}$,   $T_q^{1-\alpha}$
 and  $T_q^{1-(\alpha+\beta)}$.

 A particular case of the identity (\ref{s2}) is
\begin{equation}\label{s22}
 I_q^{n-\alpha} ~ I_q^\alpha f(t)=
 I_q^nf(t),~~~~\alpha~>0.
\end{equation}

\begin{lemma} \label{remb1}
For $\alpha,\beta > 0$ and a function $f$ fitting suitable domains,
we have
\begin{equation}
\int_b^\infty (t-x)_q^{\beta-1}~_{b}I_q^\alpha
f(tq^{1-\beta})\nabla_qt=0
\end{equation}
\end{lemma}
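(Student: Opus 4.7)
\emph{Proof plan.} The idea is to unfold ${}_{b}I_q^\alpha f(tq^{1-\beta})$ using the defining formula (\ref{t555}), rescale the inner integration variable so that the two q-factorials share a common form, and then interchange the order of the resulting iterated q-integral.

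After substitution of (\ref{t555}) into the left-hand side, I perform the change of variable $s=rq^{1-\beta}$ in the inner integral. Property (ii) of Lemma \ref{qproperties} extracts a factor $q^{(1-\beta)(\alpha-1)}$ from $(s-tq^{1-\beta})_q^{\alpha-1}=(rq^{1-\beta}-tq^{1-\beta})_q^{\alpha-1}$, and the scaling of $\nabla_q s$ contributes another $q^{1-\beta}$, so the left-hand side reduces (up to an explicit prefactor) to
\[
\int_b^\infty(t-x)_q^{\beta-1}\int_t^{bq^{\beta-1}}(r-t)_q^{\alpha-1}f(rq^{2-\beta-\alpha})\nabla_q r\,\nabla_q t.
\]

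Next I would expand each q-integral into its Jackson-type series using (\ref{r1}) and (\ref{r2}) and reorder the resulting double sum (legitimate under the implicit ``suitable domain'' assumption, ensuring absolute convergence). After reordering, the outer integration runs over $r$ with endpoints $b$ and $bq^{\beta-1}$ on the time scale $T_q^{\beta-1}$, while the inner integration runs over $t\in[b,r]$ on $T_q$.

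The final step is to show that this reordered double series vanishes. If $\beta\geq 1$ then $bq^{\beta-1}\leq b$, so the outer swapped range is degenerate and the two Jackson sums on the right-hand side of (\ref{r2}) cancel. If $0<\beta<1$, the inner q-integral of $(t-x)_q^{\beta-1}(r-t)_q^{\alpha-1}$ over $[b,r]$ should vanish on the relevant $r$-values, which I would verify using the endpoint relation $(r-r)_q^{\alpha-1}=0$ coming from Lemma \ref{qproperties}(i) together with a q-analogue of the Beta integral. I expect this cancellation to be the main obstacle, because it requires careful bookkeeping of which points on the two time scales $T_q$ and $T_q^{\beta-1}$ contribute to each Jackson sum; the q-factorial identities of Lemma \ref{qproperties} are the essential algebraic tool for exhibiting the telescoping.
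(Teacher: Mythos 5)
Your plan has a genuine gap, and it sits exactly at the step you yourself flag as ``the main obstacle''. The opening reduction (unfolding ${}_{b}I_q^\alpha$ via (\ref{t555}), rescaling $s=rq^{1-\beta}$ with Lemma \ref{qproperties}(ii)) is formally fine, but the route it sets up -- interchange the double Jackson sum and then hope the inner integral dies -- does not lead to a proof as described. For $0<\beta<1$ your proposed mechanism, namely that $\int_b^r (t-x)_q^{\beta-1}(r-t)_q^{\alpha-1}\nabla_q t$ vanishes by a q-Beta-type identity, is false in general: at the lattice points $b\le t<r$ both q-factorial factors are typically nonzero and of one sign, so this integral is not zero, and no Beta evaluation or telescoping rescues it. The case split $\beta\ge 1$ versus $0<\beta<1$ is a symptom of the wrong route (the statement is uniform in $\beta$), and the $\beta\ge1$ branch is also only asserted (``the two Jackson sums cancel''), not proved. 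Since you explicitly leave the decisive cancellation unverified, what you have is a reduction plus an expectation, not a proof.

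The paper's argument is far more local, and it is the idea your proposal misses: the left-hand side vanishes term by term, before any interchange of order. Expand only the outer integral by (\ref{r1}): $\int_b^\infty$ is the Jackson sum over the points $t=bq^{-i}$, so the left-hand side equals $(1-q)b\sum_{i} q^{-i}(bq^{-i}-x)_q^{\beta-1}\,{}_{b}I_q^\alpha f(q^{1-\beta}bq^{-i})$. Each coefficient ${}_{b}I_q^\alpha f(q^{1-\beta}bq^{-i})$ is itself a right q-integral, by (\ref{t555}) an integral from the point $t_0=q^{1-\beta}bq^{-i}$, lying at or beyond $b$, down to $b$, against the kernel $(s-t_0)_q^{\alpha-1}$; by the vanishing property of the q-factorial function, $(t-r)_q^{\gamma}=0$ when $t<r$ on the lattice, this kernel kills every contributing point, so ${}_{b}I_q^\alpha f(q^{1-\beta}bq^{-i})=0$ for each $i$ and the whole sum is zero. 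No Fubini-type reordering, no rescaling of the inner variable, and no q-Beta integral are needed; the entire proof is the observation that ${}_{b}I_q^\alpha f$ vanishes at arguments at or beyond the endpoint $b$.
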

\begin{proof}
From (\ref{r1}) we can write
$$\int_b^\infty (t-x)_q^{\beta-1}~_{b}I_q^\alpha
f(tq^{1-\beta})\nabla_qt=$$
\begin{equation} \label{eq1}
(1-q)b \sum_{i=0}^\infty
q^{-i}(bq^{-i}-x)_q^{\beta-1}~_{b}I_q^\alpha f(q^{1-\beta}bq^{-i})
\end{equation}
From the fact that $(t-r)_q^{\beta-1}=0$, when $t<r$ we conclude
that $_{b}I_q^\alpha f(q^{1-\beta}bq^{-i})=0$ and hence the result
follows.
\end{proof}

\textbf{Problem 1}: Can we use Lemma \ref{remb1}  and following
 similar ideas to  that in \cite{Pred} to prove that

\begin{equation}\label{s222}
 _{b}I_q^\beta ~ _{b}I_q^\alpha f(t)=~
 _{b}I_q^{\alpha+\beta}f(t),~~~~\alpha~,\beta>0, ~0<b\in T_q
\end{equation}
Alternatively, can we define the q-analogue of the Q-operator and prove
that $Q ~_{q}I_a^\alpha f(t) =~ _{b}I_q^\alpha Q f(t)$? Then apply the
Q-operator to the identity

\begin{equation}\label{t44}
 _{q}I_a^\beta ~ _{q}I_a^\alpha g(t)=
 _{q}I_a^{\alpha+\beta}g(t),~~~~\alpha~,\beta>0
\end{equation}
with $g(t)=Qf(t)$ to obtain (\ref{s222}). Recall that in the continuous
case $Qf(t)=f(a+b-t)$.

In connection to Problem 1, the following open problem is also raised

\textbf{Problem 2}: Is it possible to obtain a by-part formula for
q-fractional derivatives when the lower limit $a$ and the upper limit $b$
both exist. That is on the interval $[a,b]_q$. As for the $(0,\infty)$
case there is  a formula was early obtained by Agarwal in \cite{Agarwal}.

As for the left and right (Riemann) q-fractional derivatives of
order $\alpha > 0$,as traditionally  done in fractional calculus,
they are defined respectively by

\begin{equation} \label{lrqd}
_{q}\nabla_a^\alpha f(t)\triangleq
\nabla_q^n~_{q}I_a^{n-\alpha}f(t)~\texttt{and}~_{b}\nabla_q^\alpha
f(t)\triangleq (-1)^n \nabla_q^n~_{b}I_q^{n-\alpha}f(t)
\end{equation}
where $n=[\alpha]+1$ and $a,b\in T_q\cup \{\infty\}$ with $0\leq a <
b\leq \infty$. We usually remove the endpoints in the notation when
$a=0$ or $b=\infty$. Here, we point that the operator
$_{q}\nabla_a^\alpha$ maps functions defined on $T_q$ to functions
defined on $T_q$, while the operator $ _{b}\nabla_q^\alpha$ maps
functions defined on $T_q^{1-(n-\alpha)}$ to functions defined on
$T_q$. Also, particularly, one has to note that
\begin{equation} \label{lrqd}
_{q}\nabla_a^n f(t)=\nabla_q^n f(t)~\texttt{and}~_{b}\nabla_q^n
f(t)=(-1)^n \nabla_q^n f(t)
\end{equation}
where $\nabla_q^n$ always denotes the $n-th$ q-derivative (i.e. the
q-derivative applied n times).

\section{Caputo q-Fractional Derivative } \label{s:3}
In this section, before defining Caputo-type q-fractional
derivatives and relating them to Riemann ones, we first state and
prove some essential preparatory lemmas.
\begin{lemma} \label{m}
For any $\alpha>0$, the following equality holds:
\begin{equation} \label{m1}
_{q}I_a ^\alpha\nabla_q f(t)= \nabla_q ~ _{q}I_a ^\alpha
f(t)-\frac{(t-a)_q^{\alpha-1}}{\Gamma_q(\alpha)}f(a)
\end{equation}

\end{lemma}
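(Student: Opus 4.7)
The plan is to prove the identity by q-integration by parts on the left-hand side, and then separately verify that the resulting integral is $\nabla_q\,_qI_a^\alpha f(t)$.

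First I would write $_qI_a^\alpha \nabla_q f(t)$ via definition (\ref{t3}) and apply the product rule (\ref{qproduct}) with the choice $u(s)=(t-s)_q^{\alpha-1}$, so that $u(qs)=(t-qs)_q^{\alpha-1}$ matches the kernel. Integrating $\nabla_q(u(s)f(s)) = u(qs)\nabla_q f(s)+\nabla_q u(s)\, f(s)$ from $a$ to $t$ and using the fundamental theorem (\ref{cfq}) gives the q-integration by parts formula
$$\int_a^t (t-qs)_q^{\alpha-1} \nabla_q f(s)\,\nabla_q s = \bigl[u(s)f(s)\bigr]_a^t - \int_a^t \nabla_q u(s)\, f(s)\,\nabla_q s.$$
By Lemma \ref{qproperties}(iv), $\nabla_q u(s)=-\frac{1-q^{\alpha-1}}{1-q}(t-qs)_q^{\alpha-2}$. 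The boundary at $s=a$ contributes $(t-a)_q^{\alpha-1}f(a)$, while at $s=t$ the factor $(t-t)_q^{\alpha-1}$ vanishes for $\alpha\neq 1$: directly for positive integer $\alpha-1$, and via the generalized product (\ref{qfactg}) when $\alpha-1\leq 0$, whose $i=0$ numerator is $1-1=0$. Dividing by $\Gamma_q(\alpha)$ and applying the recurrence (\ref{qid}) to collapse $\tfrac{1}{\Gamma_q(\alpha)}\tfrac{1-q^{\alpha-1}}{1-q}=\tfrac{1}{\Gamma_q(\alpha-1)}$ yields
$$_qI_a^\alpha \nabla_q f(t) = -\frac{(t-a)_q^{\alpha-1}}{\Gamma_q(\alpha)} f(a) + \,_qI_a^{\alpha-1} f(t).$$

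To finish, I would show $\nabla_q\,_qI_a^\alpha f(t)={}_qI_a^{\alpha-1} f(t)$ by differentiating under the q-integral sign with (\ref{help}); the emerging boundary term $(qt-qt)_q^{\alpha-1} f(t) = q^{\alpha-1}(t-t)_q^{\alpha-1} f(t)$ vanishes for $\alpha\neq 1$ (Lemma \ref{qproperties}(ii) together with the same argument as above), while the interior term is handled by Lemma \ref{qproperties}(iii) and the $\Gamma_q$ identity to recover $_qI_a^{\alpha-1}f(t)$. Substituting back delivers the claimed identity. The main obstacle I anticipate is the clean handling of $(t-t)_q^{\alpha-1}$ across the three regimes $\alpha>1$, $\alpha=1$, and $0<\alpha<1$; in particular, the case $\alpha=1$ is degenerate since $(t-t)_q^0=1$, and must be verified separately via the fundamental theorem of q-calculus (where the interior integral vanishes because $1-q^{\alpha-1}=0$, and the boundary term alone gives $f(t)-f(a)$, matching both sides of the identity).
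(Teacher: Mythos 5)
Your proposal is correct and follows essentially the same route as the paper: q-integration by parts built from the product rule (\ref{qproduct}) and Lemma \ref{qproperties}(iv), combined with differentiation under the q-integral via (\ref{help}) and Lemma \ref{qproperties}(iii) to identify the remaining integral with $\nabla_q\,{}_{q}I_a^\alpha f(t)$. Your explicit treatment of the vanishing of $(t-t)_q^{\alpha-1}$ and of the degenerate case $\alpha=1$ is a welcome refinement of details the paper leaves implicit, but it does not change the argument.
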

\begin{proof}
From (\ref{qproduct}) and (iv) of Lemma  \ref{qproperties}, we
obtain the following q-integration by parts:
\begin{equation} \label{mm1}
\nabla_q ((t-s)_q^{\alpha-1}f(s))= (t-qs)_q^{\alpha-1}\nabla_q f(s)-
\frac{1-q^{\alpha-1}}{1-q}(t-qs)_q^{\alpha-2}f(s)
\end{equation}
Applying (\ref{mm1}) leads to
\begin{equation} \label{m2}
_{q}I_a ^\alpha\nabla_q f(t)=\frac{(t-s)_q
^{\alpha-1}}{\Gamma_q(\alpha)}f(s)|_a^t
+\frac{1-q^{\alpha-1}}{1-q}\int_a^t (t-qs)_q^{\alpha-2}f(s)\nabla_qs
\end{equation}
or
$$
_{q}I_a ^\alpha\nabla_q f(t)= - \frac{(t-a)_q ^{\alpha-1} }{\Gamma_q
(\alpha)}f(a)+ \frac{1-q^{\alpha-1}}{1-q}\int_a^t
(t-qs)_q^{\alpha-2}f(s)\nabla_qs
$$
On the other hand, and by the help of (iii) of Lemma
\ref{qproperties}, (\ref{help}) and the identity (\ref{qid}), we
find that
\begin{equation} \label{m4}
\nabla_q ~ _{q}I_a ^\alpha f(t)=\frac{1-q^{\alpha-1}}{1-q}\int_a^t
(t-qs)_q^{\alpha-2}f(s)\nabla_qs,
\end{equation}
which completes the proof.
\end{proof}

\begin{theorem} \label{qinter}
For any real $\alpha>0$ and any positive integer $p$ such that
$\alpha -p+1$ is not negative integer or 0, in particular $\alpha >p-1$,
the following equality holds:
\begin{equation} \label{qinter1}
_{q}I_a ^\alpha \nabla_q^p f(t)= \nabla_q^p~ _{q}I_a ^\alpha
f(t)-\sum_{k=0}^{p-1}\frac{(t-a)_q^{\alpha-p+k}}{\Gamma_q(\alpha+k-p+1)}\nabla_q^k
f(a)
\end{equation}
\end{theorem}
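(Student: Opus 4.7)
The natural plan is induction on the integer $p$, with Lemma \ref{m} supplying both the base case and the engine of the induction step. For $p=1$, the claimed identity reduces to
$$_{q}I_a^\alpha \nabla_q f(t)= \nabla_q ~_{q}I_a^\alpha f(t)-\frac{(t-a)_q^{\alpha-1}}{\Gamma_q(\alpha)}f(a),$$
which is exactly the content of Lemma \ref{m}.

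For the inductive step, I would assume the formula at level $p$ and derive it at level $p+1$. Writing $\nabla_q^{p+1} f=\nabla_q(\nabla_q^p f)$ and applying Lemma \ref{m} (with $f$ replaced by $\nabla_q^p f$) peels off one derivative:
$$_{q}I_a^\alpha \nabla_q^{p+1} f(t)=\nabla_q\,_{q}I_a^\alpha \nabla_q^p f(t)-\frac{(t-a)_q^{\alpha-1}}{\Gamma_q(\alpha)}\nabla_q^p f(a).$$
Now I would substitute the induction hypothesis into $_{q}I_a^\alpha \nabla_q^p f(t)$ and then differentiate the resulting sum term by term using part (iii) of Lemma \ref{qproperties}, which gives $\nabla_q(t-a)_q^{\alpha-p+k}=\frac{1-q^{\alpha-p+k}}{1-q}(t-a)_q^{\alpha-p+k-1}$.

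The coefficient tidying is where the bookkeeping must be done carefully, but it is routine once one invokes the q-gamma identity (\ref{qid}): the factor $\frac{1-q^{\alpha-p+k}}{1-q}$ in the numerator cancels against $\Gamma_q(\alpha+k-p+1)=\frac{1-q^{\alpha-p+k}}{1-q}\Gamma_q(\alpha-p+k)$ in the denominator, producing a sum whose general term is exactly $\frac{(t-a)_q^{\alpha-p+k-1}}{\Gamma_q(\alpha+k-p)}\nabla_q^k f(a)$ for $k=0,\dots,p-1$. Re-indexing by $k'=k$ and absorbing the extra boundary term $\frac{(t-a)_q^{\alpha-1}}{\Gamma_q(\alpha)}\nabla_q^p f(a)$ (coming from Lemma \ref{m}) as the $k=p$ contribution, one obtains precisely the asserted formula for $p+1$, namely $\nabla_q^{p+1}\,_{q}I_a^\alpha f(t)-\sum_{k=0}^{p}\frac{(t-a)_q^{\alpha-p+k-1}}{\Gamma_q(\alpha+k-p)}\nabla_q^k f(a)$.

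The only real obstacle is keeping track of the index shifts between $\alpha-p+k$ and $\alpha-(p+1)+k$ in both the q-factorial exponents and the q-gamma arguments; the hypothesis that $\alpha-p+1$ is not a non-positive integer is what ensures the $\Gamma_q$ denominators encountered during the telescoping remain finite and nonzero, so no edge case spoils the cancellation. Apart from that, the proof is a straightforward differentiate-and-re-index argument built on Lemma \ref{m} and Lemma \ref{qproperties}.
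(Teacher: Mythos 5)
Your proposal is correct and follows essentially the same route as the paper: the paper's proof is exactly the induction on $p$ you describe, resting on Lemma \ref{m}, part (iii) of Lemma \ref{qproperties}, and the identity (\ref{qid}), though the paper only sketches it while you carry out the cancellation $\frac{1-q^{\alpha-p+k}}{1-q}\,/\,\Gamma_q(\alpha+k-p+1)=1/\Gamma_q(\alpha+k-p)$ and the absorption of the boundary term as the $k=p$ summand explicitly.
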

\begin{proof}
The proof can be achieved by following inductively on $p~$   and
making use of Lemma \ref{m}, (iii) of Lemma \ref{qproperties} and
(\ref{qid}).
\end{proof}
Now we obtain an analogue to Lemma  \ref{m} for the right
q-integrals.
\begin{lemma} \label{rm}
For any $\alpha > 0$, the following equality holds:

\begin{equation}\label{th}
_{q^{-1}b}I_q^\alpha~_{b}\nabla_q f(t)=~_{b}\nabla_q~_{b}I_q^\alpha
f(t)-\frac{r(\alpha)}{\Gamma_q(\alpha)}(b-qt)_q^{\alpha-1}f(q^{1-\alpha}q^{-1}b)
\end{equation}
where
\begin{equation}
r(\alpha)= q^{(-1/2)\alpha(\alpha-1)}
\end{equation}
 and
 $$_{b}\nabla_q f(t)=-\nabla_q f(t)$$.
\end{lemma}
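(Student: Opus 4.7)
The plan is to mirror the proof of Lemma \ref{m} on the right side. Starting from the definition (\ref{t555}) and the convention $_{b}\nabla_q f=-\nabla_q f$, I would first write
$$_{q^{-1}b}I_q^\alpha~_{b}\nabla_q f(t)=-\frac{r(\alpha)}{\Gamma_q(\alpha)}\int_t^{q^{-1}b}(s-t)_q^{\alpha-1}(\nabla_q f)(sq^{1-\alpha})\nabla_q s,$$
and then convert $(\nabla_q f)(sq^{1-\alpha})$ into a $\nabla_q$ in the integration variable by the scaling identity $\nabla_q[f(sq^{1-\alpha})]=q^{1-\alpha}(\nabla_q f)(sq^{1-\alpha})$. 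Writing $F(s):=f(sq^{1-\alpha})$ turns the integrand into a genuine $\nabla_q F(s)$, at the cost of a factor $q^{\alpha-1}$.

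Next I would derive the q-integration-by-parts identity analogous to (\ref{mm1}). Using the product rule (\ref{qproduct}) in the form $\nabla_q(fg)(s)=f(s)\nabla_q g(s)+g(qs)\nabla_q f(s)$, together with the $s$-version of (iii) of Lemma \ref{qproperties}, namely $\nabla_q^{(s)}(s-t)_q^{\alpha-1}=\frac{1-q^{\alpha-1}}{1-q}(s-t)_q^{\alpha-2}$, I get
$$(s-t)_q^{\alpha-1}\nabla_q F(s)=\nabla_q\bigl[(s-t)_q^{\alpha-1}F(s)\bigr]-\frac{1-q^{\alpha-1}}{1-q}(s-t)_q^{\alpha-2}F(qs).$$
Integrating over $[t,q^{-1}b]$ by the fundamental theorem (\ref{cfq}), the antiderivative-part at $s=t$ vanishes because $(t-t)_q^{\alpha-1}=0$ from (\ref{qfactg}), leaving the boundary value $(q^{-1}b-t)_q^{\alpha-1}F(q^{-1}b)$. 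By (ii) of Lemma \ref{qproperties} this equals $q^{1-\alpha}(b-qt)_q^{\alpha-1}f(q^{1-\alpha}q^{-1}b)$, which (after multiplying by the prefactor $-r(\alpha)q^{\alpha-1}/\Gamma_q(\alpha)$) produces exactly the boundary term claimed in (\ref{th}).

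Separately, I would compute $_b\nabla_q~_bI_q^\alpha f(t)=-\nabla_q~_bI_q^\alpha f(t)$ by applying (\ref{help1}) to the integral definition of $_{b}I_q^\alpha f$. The $f(t,t)$-type term there again vanishes due to $(t-t)_q^{\alpha-1}=0$, and the $s$-version of (iv) of Lemma \ref{qproperties} gives $\nabla_q^{(t)}(s-t)_q^{\alpha-1}=-\frac{1-q^{\alpha-1}}{1-q}(s-qt)_q^{\alpha-2}$. Combining this with (\ref{qid}) in the form $\frac{1-q^{\alpha-1}}{(1-q)\Gamma_q(\alpha)}=\frac{1}{\Gamma_q(\alpha-1)}$ yields
$$_{b}\nabla_q~_bI_q^\alpha f(t)=\frac{r(\alpha)}{\Gamma_q(\alpha-1)}\int_{qt}^{b}(s-qt)_q^{\alpha-2}f(sq^{1-\alpha})\nabla_q s.$$

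The last step is to match this with the integral residue from the integration by parts, which has the form $\int_t^{q^{-1}b}(s-t)_q^{\alpha-2}F(qs)\nabla_q s$ with $F(qs)=f(sq^{2-\alpha})$. I would carry out the shift $s\mapsto q^{-1}s$ in the q-integral (which sends the limits $[t,q^{-1}b]$ to $[qt,b]$ on the time scale $T_q$) and use (ii) of Lemma \ref{qproperties} to convert $(q^{-1}s-t)_q^{\alpha-2}$ into $q^{2-\alpha}(s-qt)_q^{\alpha-2}$; this produces precisely $q^{1-\alpha}\int_{qt}^{b}(s-qt)_q^{\alpha-2}f(sq^{1-\alpha})\nabla_q s$. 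Tracking the constants, the two integral contributions cancel and only the boundary term survives, giving (\ref{th}). The main obstacle in the argument is this bookkeeping: making sure the factor $q^{\alpha-1}$ introduced by the scaling of $\nabla_q$, the factor $q^{1-\alpha}$ produced by the change of variable, and the factor $q^{1-\alpha}$ from rewriting $(q^{-1}b-t)_q^{\alpha-1}$ all combine correctly so that the integral pieces cancel and the remaining boundary term carries exactly the prefactor $r(\alpha)/\Gamma_q(\alpha)$.
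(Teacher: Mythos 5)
Your proposal is correct and follows essentially the same route as the paper's proof: the same q-integration-by-parts identity built from the product rule (\ref{qproduct}) and Lemma \ref{qproperties} (iii)/(iv), the same use of (\ref{help1}) with the vanishing $(t-t)_q^{\alpha-1}$ term for $_{b}\nabla_q\,{_{b}I_q^\alpha}f$, and the same boundary evaluation rescaled by (ii) to give $(b-qt)_q^{\alpha-1}f(q^{1-\alpha}q^{-1}b)$. The only differences are cosmetic: you shift the residual integral to $[qt,b]$ to match $_{b}\nabla_q\,{_{b}I_q^\alpha}f$, while the paper rescales the latter to $[t,q^{-1}b]$, and your rewriting of the constant as $1/\Gamma_q(\alpha-1)$ is an optional use of (\ref{qid}).
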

\begin{proof}
First, by the help of (iii) of Lemma \ref{qproperties} and
(\ref{qproduct}), the following q-calculus by-parts version is
valid: $$(s-t)_q^{\alpha-1}\nabla_qf(sq^{1-\alpha})q^{1-\alpha}=$$

\begin{equation} \label{rid}
\nabla_q((s-t)_q^{\alpha-1}f(sq^{1-\alpha}))-\frac{1-q^{\alpha-1}}{1-q}(s-t)_q^{\alpha-2}
f(sq^{2-\alpha})
\end{equation}
where the q-derivative is applied with respect to $s$. Using
(\ref{rid}) we obtain
 $$ _{q^{-1}b}I_q^\alpha~_{b}\nabla_q f(t)=$$
\begin{equation} \label{m1}
\frac{q^{\alpha-1}r(\alpha)}{\Gamma_q(\alpha)}(\frac{1-q^{\alpha-1}}{1-q}\int_t^{q^{-1}b}
(s-t)_q^{\alpha-2}f(q^{2-\alpha}s)\nabla_qs-(s-t)_q^{\alpha-1}f(q^{1-\alpha}s)|_t^{q^{-1}b})
\end{equation}
\begin{equation} \label{m2}
=\frac{ q^{\alpha-1}r(\alpha) }
{\Gamma_q(\alpha)}(\frac{1-q^{\alpha-1}}{1-q}\int_t^{q^{-1}b}
(s-t)_q^{\alpha-2}f(q^{2-\alpha}s)\nabla_qs-(q^{-1}b-t)_q^{\alpha-1}f(q^{1-\alpha}q^{-1}b))
\end{equation}
On the other hand (\ref{help1}) and (iv) of Lemma \ref{qproperties}
imply

\begin{equation} \label{m3}
~_{b}\nabla_q~ _{b}I_q^\alpha f(t)=\frac{ q^{\alpha-1}r(\alpha) }
{\Gamma_q(\alpha)}\frac{1-q^{\alpha-1}}{1-q}\int_t^{q^{-1}b}
(s-t)_q^{\alpha-2}f(q^{2-\alpha}s)\nabla_qs
\end{equation}
Taking into account (\ref{m2}) and (\ref{m3}), identity (\ref{th})
will follow and the proof is complete.
\end{proof}

 One has  to note that the above formula (\ref{th})
 holds under the request that $f$ must be at least  defined on $T_q$ and
$T_q^{1-\alpha}$.

\begin{definition}\label{qcd}
Let  $\alpha>0$. If $\alpha \notin \mathbb{N}$ , then the
$\alpha-$order Caputo left q-fractional and right q-fractional
derivatives of a function $f$ are, respectively,  defined by

\begin{equation} \label{qrd}
_{q}C_a^\alpha f(t)\triangleq~ _{q}I_a ^{(n-\alpha)}\nabla_q
^nf(t)=\frac{1}{\Gamma(n-\alpha)} \int_a^t(t-qs)_q^{n-\alpha-1}
\nabla_q^nf(s)\nabla_q s
\end{equation}
and

\begin{equation} \label{qld}
_{b}C_q^\alpha f(t)\triangleq ~_{b}I_q ^{(n-\alpha)}\nabla_b
^nf(t)=\frac{q^{(-1/2)\alpha(\alpha-1)}}{\Gamma_q(n-\alpha)}
\int_t^b(s-t)_q^{n-\alpha-1}~_{b}\nabla_q^nf(sq^{1-\alpha })\nabla_q
s
\end{equation}
where $n=[\alpha]+1$.

If $\alpha \in \mathbb{N}$, then $_{q}C_a^\alpha f(t)\triangleq
\nabla_q^n f(t)$ and $_{b}C_q^\alpha f(t)\triangleq~ _{b}\nabla_q^n=
(-1)^n \nabla_q^n$
\end{definition}
Also, it is clear that $_{q}C_a^\alpha $ maps functions defined on
$T_q$ to functions defined on $T_q$, and that $_{b}C_q^\alpha $ maps
functions defined on $T_q^{1-\alpha}$ to functions defined on $T_q$

 If, in Lemma \ref{m} and Lemma \ref{rm} we replace
$\alpha$ by $1-\alpha$ . Then, we can relate the left
and right Riemann and Caputo q-fractional derivatives. Namely, we
state

\begin{theorem} \label{qrelate}
For any $0<\alpha<1$, we have
\begin{equation}\label{relate1}
_{q}C_a^\alpha f(t)=~_{q}\nabla_a^\alpha f(t)-\frac{
(t-a)_q^{-\alpha}}{\Gamma_q(1-\alpha)} f(a)
\end{equation}
and

\begin{equation}\label{qrelate2}
 _{q^{-1}b}C_q^\alpha f(t)=~_{b}\nabla_q^\alpha
f(t)-\frac{r(1-\alpha)}{\Gamma_q(1-\alpha)}
 (b-qt)_q^{-\alpha}f(q^\alpha q^{-1}b)
\end{equation}

\end{theorem}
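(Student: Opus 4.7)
The plan is to simply unpack both sides of (\ref{relate1}) and (\ref{qrelate2}) under the hypothesis $0<\alpha<1$, observe that the integer $n=[\alpha]+1$ collapses to $1$ so the Caputo derivatives specialize to a single classical $\nabla_q$ placed inside the corresponding right or left fractional integral of order $1-\alpha$, and then invoke Lemma \ref{m} and Lemma \ref{rm} with the parameter $\alpha$ replaced by $1-\alpha$, as suggested in the paragraph preceding the theorem. No new estimate is needed; the work is purely algebraic identification.

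First I would set $n=1$ in Definition \ref{qcd}, so that the left Caputo derivative reads $_{q}C_a^\alpha f(t)=~_{q}I_a^{1-\alpha}\nabla_q f(t)$ and the left Riemann derivative (\ref{lrqd}) reads $_{q}\nabla_a^\alpha f(t)=\nabla_q~_{q}I_a^{1-\alpha} f(t)$. Applying Lemma \ref{m} with $\alpha\mapsto 1-\alpha$ gives
\begin{equation*}
_{q}I_a^{1-\alpha}\nabla_q f(t)=\nabla_q~_{q}I_a^{1-\alpha} f(t)-\frac{(t-a)_q^{-\alpha}}{\Gamma_q(1-\alpha)}f(a),
\end{equation*}
which is exactly (\ref{relate1}). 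The only things to check are that $1-\alpha>0$ (so Lemma \ref{m} applies) and that $\Gamma_q(1-\alpha)$ is defined, both of which follow from $0<\alpha<1$.

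For the right version I would again take $n=1$, so that $_{q^{-1}b}C_q^\alpha f(t)=~_{q^{-1}b}I_q^{1-\alpha}({}_{b}\nabla_q f)(t)$, using that $_{b}\nabla_q f=-\nabla_q f$ is independent of the subscript. Lemma \ref{rm} with $\alpha\mapsto 1-\alpha$ then yields
\begin{equation*}
_{q^{-1}b}I_q^{1-\alpha}~_{b}\nabla_q f(t)=~_{b}\nabla_q~_{b}I_q^{1-\alpha}f(t)-\frac{r(1-\alpha)}{\Gamma_q(1-\alpha)}(b-qt)_q^{-\alpha}f(q^{\alpha}q^{-1}b),
\end{equation*}
since the shift $q^{1-(1-\alpha)}q^{-1}b=q^{\alpha}q^{-1}b$ matches the required argument. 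The leading term on the right is, by (\ref{lrqd}) with $n=1$, precisely $_{b}\nabla_q^\alpha f(t)$, so (\ref{qrelate2}) follows.

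I expect no serious obstacle: the real work was done in Lemmas \ref{m} and \ref{rm}. The only place where care is needed is the right-sided bookkeeping — confirming that the subscript $_{q^{-1}b}$ on the Caputo derivative, rather than $_{b}$, is exactly what is required to match Lemma \ref{rm} (where the outer integral is $_{q^{-1}b}I_q^{1-\alpha}$ but the inner derivative is $_{b}\nabla_q$), and tracking the $q^{1-\alpha}$ shift inherited from the right-integral definition (\ref{t555}) so that the boundary term evaluates $f$ at $q^{\alpha}q^{-1}b$ and not some neighboring node of $T_q$.
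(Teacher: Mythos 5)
Your proposal is correct and follows exactly the route the paper intends: the paper states the theorem as an immediate consequence of Lemma \ref{m} and Lemma \ref{rm} with $\alpha$ replaced by $1-\alpha$, which is precisely your argument (with the $n=1$ reduction and the $q^{\alpha}q^{-1}b$ bookkeeping spelled out more explicitly than in the paper). Nothing further is needed.
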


\section {A Caputo q-fractional Initial Value Problem and q-Mittag-Leffler
Function}\label{s:4}

The following identity which  is useful to transform Caputo q-fractional
difference equations  into q-fractional integrals, will be  our key in
this section.

\begin{proposition} \label{qtrans}
Assume $\alpha>0$ and $f$ is defined in suitable domains. Then
\begin{equation}\label{qtrans1}
_{q}I_a^\alpha~ _{q}C_a^\alpha
f(t)=f(t)-\sum_{k=0}^{n-1}\frac{(t-a)_q^{k}}{\Gamma_q(k+1)}\nabla_q^kf(a)
\end{equation}
and if $0<\alpha\leq1$ then
\begin{equation}\label{qtrans3}
_{q}I_a^\alpha~ _{q}C_a^\alpha f(t)= f(t)-f(a)
 \end{equation}

\end{proposition}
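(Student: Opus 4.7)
The plan is to reduce the claim directly to the interchange formula of Theorem~\ref{qinter} combined with the semigroup property \eqref{t4}. Unwrapping the Caputo derivative via Definition~\ref{qcd} gives $_{q}I_a^\alpha\,{}_{q}C_a^\alpha f(t) = {}_{q}I_a^\alpha\,{}_{q}I_a^{n-\alpha}\nabla_q^n f(t)$, with $n=[\alpha]+1$. Both exponents $\alpha$ and $n-\alpha$ are strictly positive, so \eqref{t4} collapses the composition to $_{q}I_a^n\nabla_q^n f(t)$.

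Second, I would apply Theorem~\ref{qinter} with the roles of $\alpha$ and $p$ both taken to be $n$. The admissibility hypothesis ``$\alpha-p+1$ is not a non-positive integer'' becomes $n-n+1=1$, which is harmless, so the theorem yields $_{q}I_a^n\nabla_q^n f(t) = \nabla_q^n\,{}_{q}I_a^n f(t) - \sum_{k=0}^{n-1}\tfrac{(t-a)_q^{k}}{\Gamma_q(k+1)}\nabla_q^k f(a)$. The q-analogue of Cauchy's formula \eqref{t11} reduces the first term on the right to $f(t)$, which is exactly the asserted identity \eqref{qtrans1}.

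Finally, for \eqref{qtrans3}: in the range $0<\alpha<1$ we have $n=1$, so the sum in \eqref{qtrans1} contains only its $k=0$ term, which equals $\tfrac{(t-a)_q^0}{\Gamma_q(1)}f(a)=f(a)$, producing $f(t)-f(a)$. The borderline case $\alpha=1$ is handled by the integer clause of Definition~\ref{qcd} together with Lemma~\ref{m} specialized to $\alpha=1$, where the coefficient $\tfrac{(t-a)_q^{\,0}}{\Gamma_q(1)}$ again equals $1$. I do not anticipate any serious obstacle: the argument is essentially bookkeeping once Theorem~\ref{qinter} and \eqref{t4} are in hand. The one thing to watch is that the domain conditions on $f$ must be strong enough to justify both the semigroup property and the interchange formula, which is precisely what the phrase \emph{``in suitable domains''} in the statement is designed to encompass.
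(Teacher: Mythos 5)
Your proof is correct and is essentially the paper's own argument: the paper's one-line proof likewise invokes the definition of the Caputo derivative, the Cauchy-type identity (\ref{t11}), Lemma \ref{m} and Theorem \ref{qinter} (applied as you do with order and $p$ both equal to $n$), and your explicit appeal to the semigroup property (\ref{t4}) to collapse $_{q}I_a^\alpha\,{}_{q}I_a^{n-\alpha}$ into $_{q}I_a^{n}$ simply fills in the step the paper leaves implicit. No gaps.
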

The proof followed by definition of Caputo q-fractional derivatives,
(\ref{t11}),  Lemma \ref{m} and Theorem \ref{qinter}.

The following identity \cite{Pred} is essential to solve linear
q-fractional equations
\begin{equation}\label{qpower}
_{q}I_a^\alpha (x-a)_q^\mu
=\frac{\Gamma_q(\mu+1)}{\Gamma_q(\alpha+\mu+1)}(x-a)_q^{\mu+\alpha}~~(0<a<x<b)
\end{equation}
where $\alpha \in \mathbb{R}^+$ and $\mu \in (-1,\infty)$.

\begin{example} \label{qlinear}
Let $0<\alpha\leq 1$ and consider the left Caputo q-fractional difference
equation
\begin{equation} \label{lfractional}
_{q}C^ \alpha _a y(t)= \lambda y(t)+f(t),~~y(a)=a_0,~t\in T_q.
\end{equation}
 if we apply $_{q}I_a^\alpha$ on the equation (\ref{lfractional}) then by
the help of (\ref{qtrans3}) we see that
$$y(t)= a_0+ \lambda~ _{q}I_a^\alpha y(t)+~_{q}I_a^\alpha f(t).$$
To obtain an explicit clear solution, we apply the method of successive
approximation. Set $y_0(t)=a_0$ and $$y_m(t)=a_0+\lambda~ _{q}I_a^\alpha
y_{m-1}(t)+ _{q}I_a^\alpha f(t), m=1,2,3,.... $$
For $m=1$, we have by the power formula (\ref{qpower})
 $$y_1(t)=a_0[1+\frac{\lambda ( t-a)_q^{(\alpha)}}{\Gamma_q(\alpha+1)}]+~
_{q}I_a^\alpha f(t).$$
For $m=2$, we also see that
$$y_2(t)= a_0+ \lambda a_0~_{q}I_a^\alpha[1+
\frac{(t-a)_q^\alpha}{\Gamma_q(\alpha+1)}]+~_{q}I_a^{\alpha} f(t)
+\lambda~ _{q}I_a^{2\alpha} f(t)$$
$$=a_0 [1+\frac{\lambda
(t-a)_q^\alpha}{\Gamma_q(\alpha+1)}+\frac{\lambda^2
(t-a)_q^{2\alpha}}{\Gamma_q(2\alpha+1)}]+~_{q}I_a^{\alpha} f(t) +\lambda~
_{q}I_a^{2\alpha} f(t)$$
If we proceed inductively and let $m\rightarrow\infty$ we obtain the solution
$$y(t)=a_0[1+\sum_{k=1}^\infty\frac{\lambda^k (t-a)_q^{k\alpha}}
{\Gamma_q(k\alpha+1)}]+\int_a^t
[\sum_{k=1}^{\infty}\frac{\lambda^{k-1}}{\Gamma_q(\alpha
k)}(t-qs)_q^{\alpha k-1}]f(s)\nabla_qs$$
$$ = a_0[1+\sum_{k=1}^\infty\frac{\lambda^k (t-a)_q^{k\alpha}}
{\Gamma_q(k\alpha+1)}]+\int_a^t
[\sum_{k=0}^{\infty}\frac{\lambda^{k}}{\Gamma_q(\alpha
k+\alpha)}(t-qs)_q^{\alpha k+(\alpha-1)}]f(s)\nabla_qs$$
$$= a_0[1+\sum_{k=1}^\infty\frac{\lambda^k (t-a)_q^{k\alpha}}
{\Gamma_q(k\alpha+1)}]+\int_a^t
(t-qs)_q^{(\alpha-1)}[\sum_{k=0}^{\infty}\frac{\lambda^{k}}{\Gamma_q(\alpha
k+\alpha)}(t-q^\alpha s)_q^{(\alpha k)}]f(s)\nabla_qs$$
If we set $\alpha=1$, $\lambda=1$, $a=0$ and $f(t)=0$ we come to a
q-exponential formula  $e_q(t)= \sum_{k=0}^\infty
\frac{t^k}{\Gamma_q(k+1)}$ on the time scale $T_q$, where
$\Gamma_q(k+1)=[k]_q!=[1]_q[2]_q...[k]_q$ with $[r]_q=\frac{1-q^r}{1-q}$.
It is known that $e_q(t)=E_q((1-q)t)$, where $E_q(t)$ is a special case of
the basic  hypergeormetric series, given by
$$E_q(t)=~_{1}\phi_0(0;q,t)=\Pi_{n=0}^\infty
(1-q^nt)^{-1}=\sum_{n=0}^\infty \frac{t^n}{(q)_n},$$ where
$(q)_n=(1-q)(1-q^2)...(1-q^n)$ is the q-Pochhammer symbol.
\end{example}
If we compare with the classical case, then the above example suggests the
following q-analogue of Mittag-Leffler function

\begin{definition}\label{Mitt}
For $z ,z_0\in \mathbf{C}$ and $\mathfrak{R}(\alpha)> 0$, the
q-Mittag-Leffler function is defined by
\begin{equation} \label{Mit}
_{q}E_{\alpha,\beta}(\lambda,z-z_0)=\sum_{k=0}^\infty \lambda^k
\frac{(z-z_0)_q^{\alpha k}}{\Gamma_q(\alpha k+\beta)}.
\end{equation}
When $\beta=1$ we simply use
$~_{q}E_{\alpha}(\lambda,z-z_0):=~_{q}E_{\alpha,1}(\lambda,z-z_0)$.
\end{definition}

According to Definition \ref{Mitt} above, the solution of the
q-Caputo-fractional equation in Example \ref{qlinear} is expressed by
$$y(t)=a_0 ~_{q}E_\alpha (\lambda,t-a)+ \int_a^t
(t-qs)_q^{\alpha-1}~_{q}E_{\alpha,\alpha} (\lambda,t-q^\alpha s)
f(s)\nabla_q s.$$

\begin{remark} \label{last}
1) Note that the above proposed definition of the q-analogue of
Mittag-Leffler function agrees with time scale definition of exponential
functions. As it depends on the three parameters other than $\alpha$ and
$\beta$.

2) The power term of the q-Mittag-Leffler function contains $\alpha$ (the
term $(z-z_0)_q^{\alpha k}$). We include this $\alpha$ in order to express
 the solution of q-Caputo initial value problem explicitly by means of the
q-Mittag-Leffler function. This is due to that in general it is not true
for the q-factorial function to satisfy the power formula
$(z-z_0)_q^{\alpha k}= [(z-z_0)_q^{\alpha }]^k$. But for example the
latter power formula is true when $z_0=0$. Therefore, for the case
$z_0=0$, we may drop $\alpha$ from the power so that the q-Mittag-Leffler
function will tend to the classical one when $q\rightarrow 1$.

3) Once Problem 1 raised in section 2 is solved an analogue result to
Proposition \ref{qtrans} can be obtained for right Caputo q-fractional
derivatives.
\end{remark}

\end{document}